\begin{document}

\newcommand{\wk}{\mbox{$\,<$\hspace{-5pt}\footnotesize )$\,$}}

\numberwithin{equation}{section}
\newtheorem{teo}{Theorem}
\newtheorem{lemma}{Lemma}
\newtheorem{defi}{Definition}
\newtheorem{coro}{Corollary}
\newtheorem{prop}{Proposition}
\theoremstyle{remark}
\newtheorem{remark}{Remark}
\newtheorem{scho}{Scholium}
\numberwithin{lemma}{section}
\numberwithin{prop}{section}
\numberwithin{teo}{section}
\numberwithin{defi}{section}
\numberwithin{coro}{section}
\numberwithin{figure}{section}
\numberwithin{remark}{section}
\numberwithin{scho}{section}

\title{A new construction of Radon curves and related topics}

\author{Vitor Balestro, Horst Martini, and Ralph Teixeira}
\address [V. Balestro] {CEFET/RJ Campus Nova Friburgo - Nova Friburgo - Brazil;
\newline
\indent Instituto de Matem\'{a}tica e Estat\'{i}stica - UFF - Niter\'{o}i - Brazil}
\email{vitorbalestro@mat.uff.br}
\address [H. Martini] {Fakult\"at f\"ur Mathematik - Technische Universit\"at Chemnitz - 09107 Chemnitz - Germany;
\newline
\indent Dept. of Applied Mathematics - Harbin University of Science and Technology - 150080 Harbin - China}
\email{martini@mathematik.tu-chemnitz.de}
\address [R. Teixeira] {Instituto de Matem\'{a}tica e Estat\'{i}stica - UFF - Niter\'{o}i - Brazil}
\email{ralph@mat.uff.br}

\begin{abstract} We present a new construction of Radon curves which only uses convexity methods. In other words, it does not rely on an auxiliary Euclidean background metric (as in the classical works of J. Radon, W. Blaschke, G. Birkhoff, and M. M. Day), and also it does not use typical methods from plane Minkowski Geometry (as proposed by H. Martini and K. J. Swanepoel). We also discuss some properties of normed planes whose unit circle is a Radon curve and give characterizations of Radon curves only in terms of Convex Geometry.
\end{abstract}

\subjclass{Primary 52A10; Secondary 32A70, 46B20, 52A21}
\keywords{antinorm, Birkhoff orthogonality, Convex Geometry, Minkowski Geometry, normed plane, Radon curves}
\maketitle

\section{Introduction}

      Continuing and completing the investigations from \cite{martiniantinorms}, we will study \textit{Radon curves} from a slightly new point of view. Recall that Radon curves are centrally symmetric, closed, convex curves in the plane with the following property: when they are chosen  as unit circle of a norm, then (and only then) Birkhoff orthogonality is symmetric. Such curves appeared first in Radon's paper \cite{radon} (see also \cite{Bla}), and later new constructions were given by Birkhoff and Day in \cite{birkhoff} and \cite{day}, respectively. All these constructions are given in terms of some auxiliary Euclidean background metric (considering usual polarity and a $90^o$ rotation). In \cite{martiniantinorms}, Martini and Swanepoel gave a construction which does not need an auxiliary Euclidean structure. The starting points are a curve positioned within a quadrant
     (which is determined by any fixed pair of linearly independent vectors in the plane), a ``norm" defined in this quadrant by the curve, and a determinant form to obtain a norm in the adjacent quadrants in such a way that the union of such curve pieces (together with the original piece reflected at the origin) form the unit circle of a Radon norm. What we propose here is like a change of this method: we construct Radon curves using only convexity methods, and after that we show their desired properties when such a curve is chosen as unit circle of a normed plane.

We shall fix some notation. Throughout the text, $V$ denotes a \textit{two-dimensional vector space} (whose \textit{origin} is denoted by $o$), and $[\cdot,\cdot]$ stands for a non-degenerate \textit{symplectic bilinear form} (a determinant) on it. We denote by $[ab]$, $\left<ab\right>$ and $\left.[ab\right>$ the \textit{closed segment} connecting $a$ and $b$, the \textit{line} spanned by $a$ and $b$, and the \textit{half-line} with origin $a$ and through $b$; $(ab)$ is the (relatively) \textit{open segment} from $a$ to $b$. A compact, convex set $K \subset V$ with interior points is called a \textit{convex body}; by $\partial K$ and int$K$ we denote the \textit{boundary} and the
\textit{interior} of $K$, respectively. The unit ball $B$ of a normed plane is always a convex body centred at the origin. 
When the plane $V$ is endowed with a norm $||\cdot||$, then $(V, \|\cdot\|)$ is called a \textit{normed} or \textit{Minkowski plane} with $B:= \{x \in V : \| x \| \le 1\}$ and $S:=\{x \in V : \|x\|=1\}$
  as \textit{unit ball} and \textit{unit circle}, respectively. We say that a vector $x$ is \textit{Birkhoff orthogonal} to a vector $y$ if $||x|| \leq ||x+ty||$ for every $t \in \mathbb{R}$; this is denoted by $x \dashv_B y$.

For basics from the geometry of normed spaces, called Minkowski Geometry, we refer the reader to the monograph \cite{thompson} and the
surveys \cite{martini1} and \cite{martini2}. For Radon norms the main reference is \cite{martiniantinorms}, and \cite{alonso} is a suitable survey on orthogonality concepts in normed spaces.

\section{Background results}\label{back}

Within this section we briefly outline some background results from Convex Geometry that will be needed later. In Minkowski Geometry, we say that two vectors $x$ and $y$ present \textit{conjugate directions} if $x \dashv_B y$ and $y \dashv_B x$. This is equivalent to say that $x$ and $y$ are the directions of the sides of a parallelogram circumscribed to the unit circle $S$ and touched by it in the midpoints of its sides. In terms of Convex Geometry, we may formulate this as follows.

\begin{lemma}\label{lemmap} Any centrally symmetric two-dimensional convex body $K$ has a circumscribed parallelogram which touches $\partial K$ in the midpoints of its sides, and the directions of the sides of such a parallelogram are called conjugate directions (regarding the convex body K).
\end{lemma}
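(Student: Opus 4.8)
The plan is to obtain the required parallelogram from the extremal problem of inscribing a parallelogram of maximal area. For $v_1,v_2\in K$ the convex hull of the four points $\pm v_1,\pm v_2$ is a parallelogram contained in $K$, of area $2\,|[v_1,v_2]|$ (degenerating exactly when $v_1,v_2$ are linearly dependent); since $o$ lies in the interior of $K$, this quantity is positive for suitable $v_1,v_2$, so by compactness of $K\times K$ and continuity of $[\cdot,\cdot]$ the map $(v_1,v_2)\mapsto|[v_1,v_2]|$ attains a positive maximum at some pair $p,q$. After replacing $p$ by $-p$ if necessary we may assume $[p,q]>0$; then $p$ and $q$ are linearly independent. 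I expect this step to be routine.

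The heart of the argument is to read off the supporting lines from maximality. For every $x\in K$ the pair $(x,q)$ competes in the maximum, so $[x,q]\le|[x,q]|\le[p,q]$; hence $p$ maximises the linear functional $x\mapsto[x,q]$ over $K$. Therefore the line $L_1:=\{x:[x,q]=[p,q]\}$ is a supporting line of $K$ at $p$ (so in particular $p\in\partial K$), and since $[q,q]=0$ it is parallel to the direction of $q$. Symmetrically, $L_2:=\{x:[p,x]=[p,q]\}$ is a supporting line of $K$ at $q\in\partial K$, parallel to the direction of $p$, and by central symmetry $-L_1,-L_2$ support $K$ at $-p,-q$. Consequently $K$ lies between each pair of these parallel supporting lines, that is, $K\subseteq P:=\{x:|[x,q]|\le[p,q],\ |[p,x]|\le[p,q]\}$.

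It remains to identify the contact points with the midpoints of the sides of $P$. Expanding a point of $P$ in the basis $\{p,q\}$ shows that $P=\{\alpha p+\beta q:\alpha,\beta\in[-1,1]\}$, whose vertices are the four points $\pm p\pm q$; thus the side of $P$ contained in $L_1$ joins $p-q$ to $p+q$, with midpoint $p$, and cyclically the midpoints of the other three sides are $q$, $-p$, $-q$. All four midpoints lie on $\partial K$, which is precisely the claim. The only point requiring attention is the sign bookkeeping in the middle step --- making sure that maximality forces $p,q$ to be \emph{maximizers} rather than minimizers of the relevant functionals --- which is exactly what the normalization $[p,q]>0$ takes care of; and since the argument uses only that $L_1,L_2$ are \emph{some} supporting lines at $p,q$, possible corners or segments on $\partial K$ cause no trouble.
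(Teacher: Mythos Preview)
Your argument is correct. The maximisation of $|[v_1,v_2]|$ over $K\times K$ is the classical route to conjugate diameters (essentially the two-dimensional Auerbach lemma), and each step---existence of the maximiser, the reading of supporting lines from the first-order optimality, and the identification of the midpoints---is carried out cleanly. The normalisation $[p,q]>0$ indeed handles the only subtlety, and non-smooth boundary points cause no difficulty since you only need \emph{some} supporting line at $p$ and at $q$.

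The paper, by contrast, does not prove the lemma at all: it simply observes that any norm on a Minkowski plane admits a pair of conjugate directions and cites Proposition~39 of \cite{martini1}. So your approach is genuinely different in that it is self-contained and constructive, exhibiting the circumscribed parallelogram as the dual of the maximal-area inscribed parallelogram. What the paper's approach buys is brevity and a pointer to the literature; what yours buys is an explicit extremal characterisation of the conjugate directions, which is arguably more in the spirit of the paper's stated goal of using only convexity methods.
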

\begin{proof} This follows immediately from the fact that every norm in a Minkowski plane admits a pair of conjugate directions (see \cite{martini1}, Proposition 39, for a proof).

\end{proof}

The next proposition is concerned with supporting lines of plane convex bodies. It states that the quadrants defined by conjugate directions are, in some sense, dual regarding supporting relations.

\begin{prop}\label{prop25} Assume that $K$ is a plane convex body which is symmetric with respect to the origin (by translation, if necessary). Let $P$ be a parallelogram circumscribed about $K$ which touches $\partial K$ in the midpoints of its sides, and assume that its sides are in the directions $v,w$, where we choose these points in $\partial K$; i.e., we set $v,w \in \partial K$. Denote by $Q_1$ and $Q_2$ the usual first and second (closed) quadrants determined by the system of coordinates $\{v,w\}$, and let $\partial K_1 = \partial K\cap Q_1$ and $\partial K_2 = \partial K\cap Q_2$. Then, given an arbitrary point $p\in \partial K_1 \setminus\{v,w\}$, the direction of any supporting line $l$ to $K$ through $p$ must lie in $Q_2$. Moreover, any direction of $Q_2$ supports $K$ at some point of $\partial K_1$. Clearly, the same holds if we interchange the indices.

\end{prop}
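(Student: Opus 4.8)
The plan is to pass to linear coordinates adapted to the pair $v,w$, after which both assertions reduce to elementary estimates for linear functionals.

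\emph{Normalization.} First I would choose coordinates in which $v=(1,0)$ and $w=(0,1)$. The side of $P$ whose midpoint is $v$ is a supporting line of $K$ through $v$ parallel to one of $v,w$; it cannot be parallel to $v$, since the line through $(1,0)$ in that direction is the $x$-axis, which meets $\mathrm{int}\,K$; hence it is the vertical line $\{x=1\}$, and $K\subseteq\{x\le 1\}$ because $o\in K$ lies in $\{x<1\}$. Likewise $K\subseteq\{y\le 1\}$. So every $q\in K$ satisfies $q_1\le 1$ and $q_2\le 1$, while $v,w\in\partial K$ and $o\in\mathrm{int}\,K$. In these coordinates $Q_1=\{(s,t):s,t\ge 0\}$ and $Q_2=\{(s,t):s\le 0\le t\}$, and a line through $o$ lies in $Q_2$ exactly when it is vertical or has slope $\le 0$; equivalently, the lines not lying in $Q_2$ are exactly those of positive slope. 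Thus it suffices, for the first assertion, to exclude supporting lines of positive slope through $p$, and, for the second, to realize each vertical or nonpositive-slope direction as a supporting direction at a point of $\partial K_1$.

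\emph{First assertion.} Fix $p=(p_1,p_2)\in\partial K_1\setminus\{v,w\}$ and suppose some supporting line $\ell$ of $K$ through $p$ has slope $m>0$; then $\ell=\{(x,y):mx-y=c\}$ with $c=mp_1-p_2$. Since $o\in\mathrm{int}\,K$ lies strictly on the $K$-side of $\ell$ and gives $m\cdot 0-0=0$, we have $c\ne 0$. If $c>0$, then $K\subseteq\{mx-y\le c\}$, and evaluating at $v=(1,0)\in K$ gives $m\le c$, i.e.\ $m(1-p_1)\le -p_2$; since $m>0$, $p_1\le 1$, and $p_2\ge 0$, the left side is $\ge 0$ and the right side is $\le 0$, forcing $p_1=1$ and $p_2=0$, so $p=v$, a contradiction. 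If $c<0$, then $K\subseteq\{mx-y\ge c\}$, and evaluating at $w=(0,1)\in K$ gives $-1\ge c$, i.e.\ $p_2\ge 1+mp_1$; since $m>0$, $p_1\ge 0$, and $p_2\le 1$, this forces $p_2=1$ and $p_1=0$, so $p=w$, again a contradiction. Hence $\ell$ is vertical or of slope $\le 0$, so its direction lies in $Q_2$; interchanging the roles of $v$ and $w$ yields the symmetric statement.

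\emph{Second assertion.} Let $\delta$ be a direction lying in $Q_2$, and let $f$ be a nonzero linear functional with $\ker f=\delta$; replacing $f$ by $-f$ if necessary, I may assume $f(v)\ge 0$ and $f(w)\ge 0$, i.e.\ $f(x,y)=n_1x+n_2y$ with $n=(n_1,n_2)\in Q_1\setminus\{o\}$ --- that this is possible precisely when $\delta$ lies in $Q_2$ is immediate from the coordinate description. The supporting line of $K$ of direction $\delta$ on which $f$ is maximal meets $K$ along the nonempty set $F=\{q\in K:f(q)=h\}$, where $h=\max_{q\in K}f(q)$, and $F\subseteq\partial K$. I would then verify $F\cap Q_1\ne\emptyset$. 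If $n_1>0$ and $n_2>0$: every $q\in K$ with $q_2<0$ satisfies $f(q)<n_1q_1\le n_1=f(v)\le h$, so $F\subseteq\{q_2\ge 0\}$, and symmetrically $F\subseteq\{q_1\ge 0\}$; hence $\emptyset\ne F\subseteq Q_1$. If $n_2=0$ (so $n_1>0$), then $h=n_1\max_{q\in K}q_1=n_1$ because $K\subseteq\{x\le 1\}$ and $v\in K$, so $v\in F$; the case $n_1=0$ similarly gives $w\in F$. In every case $\emptyset\ne F\cap Q_1\subseteq\partial K\cap Q_1=\partial K_1$, so $\delta$ supports $K$ at a point of $\partial K_1$.

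\emph{Where the difficulty lies.} The geometric content is slight once coordinates are fixed; the two points that need care are, first, observing that ``circumscribed about $K$, touching $\partial K$ at the midpoints of its sides'' pins $P$ down to the coordinate square, so that the bounds $q_1,q_2\le 1$ are available, and second, the degenerate directions in the second assertion --- the vertical and horizontal ones --- for which the face $F$ may protrude beyond $Q_1$ while still containing $v$ or $w$. That the proposition is phrased with the \emph{closed} quadrants is precisely what makes these boundary cases harmless.
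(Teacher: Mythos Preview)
Your proof is correct. It is, at the level of ideas, the same argument the paper sketches---a supporting line through $p\in\partial K_1\setminus\{v,w\}$ with direction in $Q_1$ would leave $v$ or $w$ on the wrong side, and for the converse one locates the face where a suitable linear functional is maximized---but the executions differ in style. The paper argues synthetically: the supporting line at $p$ must fall in the double cone bounded by $\langle vp\rangle$ and $\langle pw\rangle$ (else it separates $v$ from $w$), and every direction in that cone lies in $Q_2$ because $p$ sits in the square $ow(v{+}w)v$; for the second part it invokes the fact that a line of direction $y\in\mathrm{int}(Q_2)$ through a point of $\mathrm{int}(Q_2)$ separates $\{v,-v\}$ or $\{w,-w\}$, so the support point cannot lie there. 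You instead fix coordinates, reduce the first assertion to two short inequality chains for the functional $mx-y$, and handle the second by maximizing $n_1x+n_2y$ with $n\in Q_1$ and checking that the maximizing face meets $Q_1$, treating the degenerate cases $n_1=0$ or $n_2=0$ explicitly. Your version is more detailed and leaves nothing to a picture; the paper's has the virtue of being coordinate-free but is avowedly a sketch.
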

\begin{proof} This is a basic, elementary result from Convex Geometry (rather than from Minkowski Geometry), and we will not give the algebraic details. First, if $p \in \partial K_1\setminus\{v,w\}$, then any supporting line $l$ to $K$ through $p$ must lie in the double cone determined by the lines $\left<pw\right>$ and $\left<vp\right>$ which does not contain $v+w$, since otherwise $l$ would separate the points $v$ and $w$ (see Figure \ref{fig57}). It is straightforward that any direction within this cone is a direction of $Q_2$ (this follows from the fact that $p$ must be contained in the parallelogram $ow(v+w)v$). Now, let $y \in Q_2$. The directions $y = v$ or $y = w$ support $K$ at $w$ and $v$, respectively. If $y \in \mathrm{int}(Q_2)$, we use the simple fact that any direction supports a given convex body in at least two points. Since any line in the direction $y$ through a point from $\mathrm{int}(Q_2)$ separates $v$ and $-v$ or $w$ and $-w$, it follows that $y$ must support $K$ at some point of $Q_1$.

\begin{figure}[t]
\centering
\includegraphics{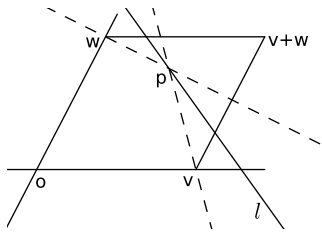}
\caption{Supporting line to $B$}
\label{fig57}
\end{figure}

\end{proof}

\section{Constructing Radon curves}\label{radon}

Let $V$ be a plane endowed with a non-degenerate symplectic form $[\cdot,\cdot]$ and fix linearly independent vectors $v,w \in V$ with $[v,w] = 1$. Consider the four usual quadrants $Q_1$, $Q_2$, $Q_3$, and $Q_4$ determined by the system of coordinates $\{v,w\}$, and let $\gamma_1$ be a curve connecting the points $v$ and $w$ with $\gamma_1 \subseteq \mathrm{conv}\{o,v,w,v+w\}$ such that the union of the segments $[ov]$ and $[ow]$ with $\gamma_1$ is the boundary of a convex body $K_1$, say. \\

We now define a (simple) curve $\gamma_2$ in the second quadrant as follows:
\begin{align*} \gamma_2(\lambda) = \frac{(1-\lambda) w + \lambda(-v)}{\sup_{x\in\gamma_1}|[x,(1-\lambda)w + \lambda(-v)]|}, \ \lambda \in [0,1]. \end{align*}
Notice that each point of the curve $\gamma_2$ is the image of a point $z_{\lambda} = (1-\lambda)w+\lambda(-v)$, $\lambda \in [0,1]$, from the segment $[(-v)w]$ by a homothety with center in the origin and ratio $1/s(\lambda)$, where $s:[0,1]\rightarrow\mathbb{R}$ is the function defined as $s(\lambda):= \sup_{x\in\gamma_1}|[x,z_{\lambda}]|$. Hence, in order to obtain geometric properties of $\gamma_2$, we will study this function.

\begin{lemma}\label{lemma20} For the function $s:[0,1] \rightarrow \mathbb{R}$ defined above we have \\

\normalfont\noindent\textbf{(i)} $\max\{1-\lambda,\lambda\} \leq s(\lambda) \leq 1$ \textit{for all $\lambda \in [0,1]$. In particular, $s(0) = s(1) = 1$}. \\

\noindent\textbf{(ii)} \textit{The function s is convex. In other words, if $0\leq\lambda_0\leq\lambda\leq\lambda_1\leq1$, then}
\begin{align*} (\lambda_1-\lambda_0)s(\lambda) \leq (\lambda_1-\lambda)s(\lambda_0)+(\lambda-\lambda_0)s(\lambda_1) \,.
\end{align*}
\end{lemma}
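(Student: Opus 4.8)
The plan is to reduce everything to a one-line computation of the symplectic form in the basis $\{v,w\}$. Writing a point $x\in\gamma_1$ as $x=\alpha v+\beta w$ and recalling that $z_\lambda=-\lambda v+(1-\lambda)w$, bilinearity together with $[v,w]=1$ gives $[x,z_\lambda]=\alpha(1-\lambda)+\beta\lambda$. Since $\gamma_1\subseteq\mathrm{conv}\{o,v,w,v+w\}$, every $x\in\gamma_1$ has coordinates $\alpha,\beta\in[0,1]$, so $[x,z_\lambda]\ge 0$ and the absolute value in the definition of $s$ may be dropped:
\[
s(\lambda)=\sup_{x\in\gamma_1}\bigl(\alpha(x)(1-\lambda)+\beta(x)\lambda\bigr),
\]
where $x=\alpha(x)v+\beta(x)w$. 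Everything else is extracted from this formula.

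For part (i), the estimate $\alpha(1-\lambda)+\beta\lambda\le(1-\lambda)+\lambda=1$ (valid because $\alpha,\beta\le 1$) yields $s(\lambda)\le 1$. For the lower bound I would simply evaluate the quantity inside the supremum at the two endpoints of $\gamma_1$: at $x=v$ it equals $1-\lambda$ and at $x=w$ it equals $\lambda$, so $s(\lambda)\ge\max\{1-\lambda,\lambda\}$. Specializing to $\lambda=0$ and $\lambda=1$ makes the two estimates collapse, which gives $s(0)=s(1)=1$.

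For part (ii), observe that for each fixed $x\in\gamma_1$ the map $\lambda\mapsto\alpha(x)(1-\lambda)+\beta(x)\lambda=\alpha(x)+\lambda\bigl(\beta(x)-\alpha(x)\bigr)$ is affine in $\lambda$. Hence $s$ is a pointwise supremum of affine functions of $\lambda$ and is therefore convex, which is exactly the displayed three-point inequality. (Equivalently, one can note that $s(\lambda)=h_{K_1}(\ell_\lambda)$, where $\ell_\lambda:=[\,\cdot\,,z_\lambda]$ is a linear functional depending affinely on $\lambda$ and $h_{K_1}$ is the support function of $K_1$, which is sublinear; a sublinear function precomposed with an affine map is convex.)

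I do not expect a genuine obstacle here. The only point requiring a little care is justifying that $[x,z_\lambda]\ge 0$ for $x\in\gamma_1$, so that the absolute value disappears and $s$ becomes a clean supremum of affine functions — and this is precisely where the positioning hypothesis $\gamma_1\subseteq\mathrm{conv}\{o,v,w,v+w\}$ enters.
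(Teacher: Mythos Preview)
Your proof is correct and follows essentially the same route as the paper's: the same coordinate computation $[x,z_\lambda]=\alpha(1-\lambda)+\beta\lambda$ drives both parts. The only cosmetic difference is in (ii), where the paper keeps the absolute value and applies the triangle inequality to the convex combination $z_\lambda=\tfrac{\lambda_1-\lambda}{\lambda_1-\lambda_0}z_{\lambda_0}+\tfrac{\lambda-\lambda_0}{\lambda_1-\lambda_0}z_{\lambda_1}$, whereas you first drop the absolute value and then invoke ``supremum of affine functions is convex''; these are equivalent formulations of the same argument.
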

\begin{proof} We shall begin with \textbf{(i)}. Still writing $z_{\lambda} = (1-\lambda)w + \lambda(-v)$, we see that the inequality $\max\{1-\lambda,\lambda\} \leq s(\lambda)$ comes immediately from $|[v,z_{\lambda}]| = 1-\lambda$ and $|[w,z_{\lambda}]| = \lambda$. Now we prove $s(\lambda) \leq 1$. For any $x \in \gamma_1$ we have $x \in \mathrm{conv}\{o,v,w,v+w\}$, and hence $x$ can be written in the form $\alpha v + \beta w$ for some $\alpha,\beta \in [0,1]$. Thus,
\begin{align*}
|[x,z_{\lambda}]| = |[\alpha v + \beta w, (1-\lambda)w + \lambda(-v)]| = \alpha(1-\lambda) + \beta\lambda \leq 1.
\end{align*}
This yields the desired. \\

To prove \textbf{(ii)}, we clearly may assume $\lambda_0 \neq \lambda_1$ and write
\begin{align*} z_\lambda = \frac{\lambda_1-\lambda}{\lambda_1-\lambda_0}z_{\lambda_0} + \frac{\lambda - \lambda_0}{\lambda_1 - \lambda_0}z_{\lambda_1}. \end{align*}
\noindent Thus,
\begin{align*} s(\lambda) = \sup_{x\in\gamma_1}|[x,z_\lambda]| = \sup_{x\in\gamma_1}\left|\left[x,\frac{\lambda_1-\lambda}{\lambda_1-\lambda_0}z_{\lambda_0} + \frac{\lambda - \lambda_0}{\lambda_1 - \lambda_0}z_{\lambda_1}\right]\right| \leq \\ 
 \leq \frac{\lambda_1-\lambda}{\lambda_1-\lambda_0}\sup_{x\in\gamma_1}|[x,z_{\lambda_0}]| +  \frac{\lambda - \lambda_0}{\lambda_1 - \lambda_0}\sup_{x\in\gamma_1}|[x,z_{\lambda_1}]| = \frac{\lambda_1-\lambda}{\lambda_1-\lambda_0}s(\lambda_0) + \frac{\lambda - \lambda_0}{\lambda_1 - \lambda_0}s(\lambda_1), \end{align*}
and the proof is finished.

\end{proof}

\begin{coro}[Properties of $\gamma_2$]\label{coro16} The curve $\gamma_2$ constructed previously has, similarly to $\gamma_1$, the following properties: \\

\normalfont\noindent\textbf{(i)} $\gamma_2(0) = w$ \textit{and} $\gamma_2(1) = -v$, \\

\noindent\textbf{(ii)} $\gamma_2 \subseteq \mathrm{conv}\{-v,w,w-v\}$, \textit{and} \\

\noindent\textbf{(iii)} \textit{the union of $\gamma_2$ with the segments $[o(-v)]$ and $[ow]$ is the boundary of a convex body $K_2$, say.}
\end{coro}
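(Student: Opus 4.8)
The plan is to obtain \textbf{(i)} and \textbf{(ii)} at once from the bounds in Lemma~\ref{lemma20}(i), and to reduce \textbf{(iii)} to the convexity of $s$ proved in Lemma~\ref{lemma20}(ii). For \textbf{(i)}, simply evaluate the definition at the endpoints: $\gamma_2(0)=z_0/s(0)=w$ and $\gamma_2(1)=z_1/s(1)=-v$, using $s(0)=s(1)=1$. For \textbf{(ii)}, express a generic point of $\gamma_2$ in the coordinate system $\{v,w\}$: since $z_\lambda=-\lambda v+(1-\lambda)w$, we get $\gamma_2(\lambda)=\bigl(-\tfrac{\lambda}{s(\lambda)}\bigr)v+\bigl(\tfrac{1-\lambda}{s(\lambda)}\bigr)w$. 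Inside the second quadrant the triangle $\mathrm{conv}\{-v,w,w-v\}$ is precisely the set of points whose $v$-coordinate is $\geq-1$, whose $w$-coordinate is $\leq1$, and whose $w$-coordinate minus $v$-coordinate is $\geq1$; for the point $\gamma_2(\lambda)$ these three conditions read $\lambda\leq s(\lambda)$, $1-\lambda\leq s(\lambda)$, and $\tfrac{1}{s(\lambda)}\geq1$, i.e. $s(\lambda)\leq1$, which together are exactly Lemma~\ref{lemma20}(i). (Note also that $\gamma_2(\lambda)\in Q_2$ always, and $\gamma_2(\lambda)\in\mathrm{int}(Q_2)$ for $\lambda\in(0,1)$.)

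For \textbf{(iii)} I would first record two basic facts about $s$. It is continuous on $[0,1]$: being convex it is continuous on $(0,1)$, and the inequalities $\max\{1-\lambda,\lambda\}\leq s(\lambda)\leq1$ force $s(\lambda)\to1=s(0)=s(1)$ at the endpoints. Moreover $s(\lambda)\geq\max\{1-\lambda,\lambda\}\geq\tfrac12>0$. Hence $\gamma_2$ is a well-defined continuous curve running from $w$ to $-v$. Since $z_\lambda$ traces the segment $[(-v)w]$ as $\lambda$ runs over $[0,1]$, each ray from $o$ into $Q_2$ meets $\gamma_2$ in exactly one point, namely $z_\lambda/s(\lambda)$ on the ray through $z_\lambda$; so the set enclosed by $\gamma_2$ together with $[o(-v)]$ and $[ow]$ is the compact, star-shaped set $K_2:=\{c\,z_\lambda:\lambda\in[0,1],\ 0\leq c\leq 1/s(\lambda)\}$.

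The heart of the matter is that $K_2$ is convex, and this is exactly where Lemma~\ref{lemma20}(ii) is used. Consider the Minkowski gauge $\mu_{K_2}$; for $x=c\,z_\lambda\in Q_2$ (with $c\geq0$, $\lambda\in[0,1]$) one reads off $\mu_{K_2}(x)=c\,s(\lambda)$. I claim $\mu_{K_2}$ is subadditive on the convex cone $Q_2$. Indeed, for $x_i=c_i z_{\lambda_i}$ a direct computation in coordinates gives $x_0+x_1=(c_0+c_1)\,z_\lambda$ with $\lambda=\tfrac{c_0\lambda_0+c_1\lambda_1}{c_0+c_1}$ a convex combination of $\lambda_0$ and $\lambda_1$, so
\[ \mu_{K_2}(x_0+x_1)=(c_0+c_1)\,s\!\left(\tfrac{c_0\lambda_0+c_1\lambda_1}{c_0+c_1}\right)\leq c_0\,s(\lambda_0)+c_1\,s(\lambda_1)=\mu_{K_2}(x_0)+\mu_{K_2}(x_1), \]
the inequality being precisely the convexity of $s$. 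Together with positive homogeneity this makes $\mu_{K_2}$ sublinear, hence convex, on $Q_2$; therefore $K_2=\{x\in Q_2:\mu_{K_2}(x)\leq1\}$ is an intersection of two convex sets and is convex. It is closed and bounded, hence compact, and it has nonempty interior since it contains a full neighbourhood of $\tfrac12\gamma_2(\tfrac12)$, a point of $\mathrm{int}(Q_2)$ with $\mu_{K_2}\bigl(\tfrac12\gamma_2(\tfrac12)\bigr)=\tfrac12<1$; thus $K_2$ is a convex body. Finally $\partial K_2=\gamma_2\cup[o(-v)]\cup[ow]$: along the coordinate axes $\mu_{K_2}(tw)=t\,s(0)=t$ and $\mu_{K_2}(t(-v))=t\,s(1)=t$, so the part of $\partial K_2$ lying on $\partial Q_2$ is $[ow]\cup[o(-v)]$, while the remaining boundary points lie in $\mathrm{int}(Q_2)$ and satisfy $\mu_{K_2}=1$, which means they form $\gamma_2\setminus\{w,-v\}$.

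The only genuine obstacle is this convexity of $K_2$; once the gauge of $K_2$ on $Q_2$ has been identified with $c\,z_\lambda\mapsto c\,s(\lambda)$, it is nothing but the convexity of $s$ established in Lemma~\ref{lemma20}. (Equivalently, one may observe that this gauge is the restriction to $Q_2$ of the manifestly sublinear functional $x\mapsto\sup_{y\in K_1}|[x,y]|$ on all of $V$, and that $K_2$ is the intersection of its unit sublevel set with $Q_2$.)
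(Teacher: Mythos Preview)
Your proof is correct and rests on the same reduction as the paper's: \textbf{(i)} and \textbf{(ii)} come from the bounds in Lemma~\ref{lemma20}(i), and the convexity in \textbf{(iii)} is exactly the convexity of $s$ from Lemma~\ref{lemma20}(ii). The packaging of \textbf{(iii)} differs. The paper checks by hand that the chord $[\gamma_2(\lambda_0)\gamma_2(\lambda_1)]$ meets the ray through $z_\lambda$ at a point $\alpha z_\lambda$ with $\alpha\le 1/s(\lambda)$, solving a $2\times2$ linear system to express $\alpha$ and then invoking Lemma~\ref{lemma20}(ii). You instead identify the radial gauge of $K_2$ on $Q_2$ as $c\,z_\lambda\mapsto c\,s(\lambda)$ and show it is sublinear; this is the same inequality in a cleaner language, and your closing remark that this gauge is the restriction to $Q_2$ of the manifestly sublinear functional $x\mapsto\sup_{y\in K_1}|[x,y]|$ makes the convexity of $K_2$ almost immediate. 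Your treatment also fills in some details (continuity of $s$ at the endpoints, compactness and nonempty interior of $K_2$, the exact description of $\partial K_2$) that the paper leaves implicit, and your coordinate argument for \textbf{(ii)} covers both halves $\lambda\le\tfrac12$ and $\lambda\ge\tfrac12$ at once, whereas the paper's ray-intersection argument only writes out the first.
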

\begin{proof} Assertion \textbf{(i)} follows immediately from $s(0) = s(1) = 1$. For \textbf{(ii)}, if we assume that $\lambda \in \left[0,\frac{1}{2}\right]$, then the ray $\left.[o\gamma_2(\lambda)\right>$ intersects the segments $[(-v)w]$ and $[w(w-v)]$ at the points $z_{\lambda} = (1-\lambda)w + \lambda(-v)$ and $y_{\lambda} = \frac{1}{1-\lambda}\left((1-\lambda)w+\lambda(-v)\right)$, respectively. Hence, since we have $1-\lambda \leq s(\lambda) \leq 1$, it follows that $1 \leq \frac{1}{s(\lambda)}\leq \frac{1}{1-\lambda}$. This gives $\gamma_2(\lambda) \in \left[z_{\lambda}y_{\lambda}\right]$. This last segment is obviously contained in the desired convex region.\\

\begin{figure}[t]
\centering
\includegraphics{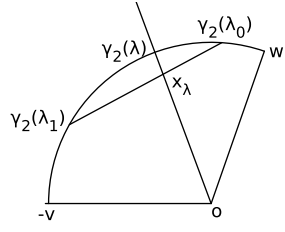}
\caption{Convexity of $\gamma_2$}
\label{fig56}
\end{figure}

To prove \textbf{(iii)} it is clearly enough to show that, for any $\lambda_0,\lambda_1 \in [0,1]$ and $\lambda_0 \leq \lambda \leq \lambda_1$, the intersection $x_{\lambda}$ of the ray $\left.[o\gamma_2(\lambda)\right>$ with the segment $[\gamma_2(\lambda_0)\gamma_2(\lambda_1)]$ obeys $x_{\lambda} \in \left.[\gamma_2(\lambda)o\right>$ (see Figure \ref{fig56}). Writing $x_{\lambda} = \alpha((1-\lambda)w + \lambda(-v))$ for some $\alpha \in \mathbb{R}$ (which is, by \textbf{(ii)}, $\geq 1$), we just have to prove that $\alpha \leq \frac{1}{s(\lambda)}$. Since there exists $\beta \in [0,1]$ such that $x_{\lambda} = (1-\beta)\gamma_2(\lambda_0) + \beta\gamma_2(\lambda_1)$,
we have the equalities
\begin{align*} \alpha(1-\lambda) = \frac{(1-\beta)(1-\lambda_0)}{s(\lambda_0)} + \frac{\beta(1-\lambda_1)}{s(\lambda_1)}, \ \mathrm{and} \end{align*}
\begin{align*} \alpha\lambda = \frac{(1-\beta)\lambda_0}{s(\lambda_0)} + \frac{\beta\lambda_1}{s(\lambda_1)}. \end{align*}
This can be seen as a system of equations in the variables $\alpha$ and $\beta$. Thus, we may calculate $\alpha$ in terms of $\lambda,\lambda_0,\lambda(1), s(\lambda_0)$, and $s(\lambda_1)$.
After some small calculation we have
\begin{align*}\alpha = \frac{\lambda_1-\lambda_0}{(\lambda_1-\lambda)s(\lambda_1) + (\lambda_0-\lambda)s(\lambda_0)}. \end{align*}
Hence $\alpha \leq \frac{1}{s(\lambda)}$ if and only if $(\lambda_1-\lambda_0)s(\lambda) \leq (\lambda_1-\lambda)s(\lambda_0)+(\lambda-\lambda_0)s(\lambda_1)$. But this is precisely item \textbf{(ii)} of the previous lemma.\\

\end{proof}

Since $\gamma_2$ connects $w$ and $-v$, it follows that the curve $\gamma = \gamma_1\cup\gamma_2\cup(-\gamma_1)\cup(-\gamma_2)$ is a closed, centrally symmetric curve. Curves constructed in this way are called \textit{Radon curves}. The next step is to prove that they form the boundaries of convex bodies.

\begin{prop} Any Radon curve is the boundary of a convex body.
\end{prop}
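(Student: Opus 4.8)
The plan is to exhibit $\gamma$ as the boundary of the set $K := K_1 \cup K_2 \cup (-K_1) \cup (-K_2)$ and to prove that $K$ is convex; the proposition then follows at once. First I would record the elementary structural facts. The set $K$ is compact (a finite union of convex bodies) and star-shaped with respect to $o$ — hence connected — and $o \in \mathrm{int}\,K$, since the tangent cones at $o$ of the four pieces are exactly the quadrants $Q_1,Q_2,Q_3,Q_4$, whose union is $V$. Moreover, since the arcs $\gamma_1 \subseteq Q_1$, $\gamma_2 \subseteq Q_2$, $-\gamma_1 \subseteq Q_3$, $-\gamma_2 \subseteq Q_4$ pairwise meet only at the points $\pm v,\pm w$, and since each of the segments $[ov],[ow],[o(-v)],[o(-w)]$ is a common edge of two of the four pieces and therefore lies in $\mathrm{int}\,K$, one checks that $\partial K = \gamma_1 \cup \gamma_2 \cup(-\gamma_1)\cup(-\gamma_2) = \gamma$. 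Thus everything reduces to proving that $K$ is convex.

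To obtain convexity I would invoke the classical fact that a closed, connected, locally convex subset of the plane is convex, and verify local convexity of $K$ at each point of $\partial K$. If $p$ lies in the relative interior of one of the four arcs, then a small neighbourhood of $p$ meets $K$ only in the relevant one of the convex bodies $\pm K_1,\pm K_2$ (because the four arcs sit in the interiors of four distinct quadrants), so there is nothing to check. The only remaining points are the corners $v,w,-v,-w$; by central symmetry, and because $v$ and $w$ play symmetric roles in the construction, it suffices to treat $p=w$.

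Near $w$ the set $K$ equals $K_1 \cup K_2$, glued along the edge $[ow]$, so its tangent cone at $w$ is $T_1 \cup T_2$, where $T_i$ is the tangent cone of the convex body $K_i$ at $w$. Here the key input from the construction enters: since $K_1 \subseteq \mathrm{conv}\{o,v,w,v+w\}$, whose tangent cone at $w$ is the quadrant $Q_4$, we get $T_1 \subseteq Q_4$; and since, by Corollary \ref{coro16}, $K_2 \subseteq \mathrm{conv}\{o,w,w-v,-v\}$, whose tangent cone at $w$ is the quadrant $Q_3$, we get $T_2 \subseteq Q_3$. Because $T_1$ and $T_2$ are convex cones with nonempty interiors, each containing the common boundary ray in direction $-w$ (that ray coming from $[ow]$), in $\{v,w\}$-coordinates one has $T_1 = \{av+bw : a\ge 0,\ b\le m_1 a\}$ with $m_1 \le 0$ and $T_2 = \{av+bw : a\le 0,\ b\le m_2 a\}$ with $m_2 \ge 0$. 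Since $m_1 \le 0 \le m_2$, the union $T_1\cup T_2 = \{av+bw : b \le g(a)\}$, with $g$ the piecewise-linear function given by $g(a)=m_1 a$ for $a\ge 0$ and $g(a)=m_2 a$ for $a\le 0$, is the subgraph of a concave function, hence a convex cone. So $K$ is locally convex at $w$, and — treating $v,-v,-w$ in the same way — at every boundary point; therefore $K$ is convex, and $\gamma=\partial K$ is the boundary of a convex body.

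I expect the corner analysis to be the one point requiring real care: one must pin down the tangent cones of the pieces correctly (this is where the convexity of $K_2$, i.e.\ Corollary \ref{coro16}(iii), and the sharp containments $\gamma_1\subseteq\mathrm{conv}\{o,v,w,v+w\}$ and $\gamma_2\subseteq\mathrm{conv}\{-v,w,w-v\}$ — rather than merely ``$\gamma_i$ lies in a quadrant'' — are genuinely needed) and then check that the unions of adjacent tangent cones are convex cones. The rest — the description of $K$ and of $\partial K$, and local convexity along the open arcs — is routine.
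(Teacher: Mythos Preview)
Your argument is correct, but it takes a noticeably different route from the paper's. The paper proceeds by a direct chord argument: any segment $[pq]$ with $p,q\in\gamma$ lies in the circumscribed parallelogram $\mathrm{conv}\{\pm v\pm w\}$, hence meets the coordinate axes only inside $[(-v)v]$ and $[(-w)w]$; cutting $[pq]$ at these intersection points decomposes it into subsegments each of which has both endpoints in a single one of $K_1,K_2,-K_1,-K_2$, so by convexity of the pieces $[pq]\subseteq K_1\cup K_2\cup(-K_1)\cup(-K_2)$. That is all---no local-to-global principle and no tangent cones are needed.

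Your approach trades this one-line chord-cutting trick for the Tietze--Nakajima theorem plus a careful corner analysis. What you gain is a conceptually uniform treatment (convexity is checked locally, and the only nontrivial points are the four corners), and your computation of the tangent cones makes explicit exactly which inclusions are doing the work, namely $\gamma_1\subseteq\mathrm{conv}\{o,v,w,v+w\}$ and the analogous containment for $\gamma_2$ from Corollary~\ref{coro16}(ii). What you lose is brevity and elementarity: the paper's argument needs nothing beyond the definition of convexity, while yours invokes a nontrivial (though classical) local-to-global result. Two minor remarks on your write-up: the phrase ``$v$ and $w$ play symmetric roles'' is slightly misleading since $\gamma_2$ is built from $\gamma_1$ and not conversely, though the \emph{properties} you use (the parallelogram containments) are indeed symmetric; and it is worth noting explicitly that $\gamma_1\setminus\{v,w\}\subseteq\mathrm{int}\,Q_1$ follows from the convex-body hypothesis on $K_1$, which justifies your claim that near an interior arc point $K$ coincides with a single piece.
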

\begin{proof} We use here the same notation as above. A segment $[pq]$ connecting two points of $\gamma$ is, in particular, contained in the parallelogram $\mathrm{conv}\{w+v,w-v,-w-v,-w+v\}$, and therefore it can intersect the axes $\left<(-v)v\right>$ and $\left<(-w)w\right>$ only within the segments $[(-v)v]$ and $[(-w)w]$, respectively. Thus, considering these (possible) intersections, we may write $[pq]$ as a union of segments such that both endpoints of each of them belong to one of the convex bodies $K_1$, $K_2$, $-K_1$, or $-K_2$. Hence $[pq]$ is contained in the union of these sets, which is precisely the region enclosed by $\gamma$.

\end{proof}

\begin{coro}\label{coro17} Any direction of $Q_2$ supports $\mathrm{conv}(\gamma)$ at some point of $\gamma_1$. Furthermore, the direction of any supporting line to $\mathrm{conv}(\gamma)$ through a point of $\gamma_1\setminus\{v,w\}$ must lie in $Q_2$. The same holds if we interchange the indices.
\end{coro}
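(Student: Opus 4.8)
The plan is to recognize Corollary \ref{coro17} as a direct application of Proposition \ref{prop25} to the convex body $K := \mathrm{conv}(\gamma)$, so that essentially all the work consists of verifying that the standing parallelogram satisfies the hypotheses required there. By the proposition just proved, $K$ is a convex body, and since $\gamma = \gamma_1\cup\gamma_2\cup(-\gamma_1)\cup(-\gamma_2)$ it is symmetric with respect to $o$.

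First I would exhibit the circumscribed parallelogram. Put $P := \mathrm{conv}\{w+v,w-v,-w-v,-w+v\}$, whose four sides run in the directions $v$ and $w$. Each of the four arcs of $\gamma$ lies in one of the sets $\mathrm{conv}\{o,v,w,v+w\}$, $\mathrm{conv}\{-v,w,w-v\}$, $\mathrm{conv}\{o,-v,-w,-v-w\}$, $\mathrm{conv}\{v,-w,v-w\}$, and all of these are contained in $P$; hence $\gamma\subseteq P$ and therefore $K\subseteq P$, which makes every side of $P$ a supporting line of $K$. The midpoints of the sides of $P$ are the points $w$, $-v$, $-w$, $v$ — precisely the endpoints of the arcs $\gamma_1,\gamma_2,-\gamma_1,-\gamma_2$ — so they lie on $\partial K = \gamma$. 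Thus $P$ is a parallelogram circumscribed about $K$ that touches $\partial K$ in the midpoints of its sides; consequently $v,w$ are conjugate directions for $K$ in the sense of Lemma \ref{lemmap}, and $K$, $P$, $v$, $w$ meet the hypotheses of Proposition \ref{prop25}.

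Next I would identify the arc $\partial K_1 = \partial K\cap Q_1$ with $\gamma_1$ (and likewise $\partial K_2 = \gamma_2$). The inclusion $\gamma_1\subseteq\partial K\cap Q_1$ is immediate from $\gamma_1\subseteq\mathrm{conv}\{o,v,w,v+w\}\subseteq Q_1$. For the reverse inclusion one checks that each of the remaining arcs meets $Q_1$ only in the axis points it already shares with $\gamma_1$: for instance a point of $\mathrm{conv}\{-v,w,w-v\}$ has the form $-(a+c)v+(b+c)w$ with $a,b,c\geq 0$ and $a+b+c=1$, and this lies in $Q_1 = \{\alpha v+\beta w : \alpha,\beta\geq 0\}$ only when $a=c=0$, i.e.\ only at the point $w$; the arcs $-\gamma_1$ and $-\gamma_2$ are handled symmetrically. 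Hence $\partial K\cap Q_1 = \gamma_1$ and $\partial K_1\setminus\{v,w\} = \gamma_1\setminus\{v,w\}$. With these identifications Proposition \ref{prop25} says exactly that every direction of $Q_2$ supports $K = \mathrm{conv}(\gamma)$ at some point of $\gamma_1$, and that any supporting line to $K$ through a point of $\gamma_1\setminus\{v,w\}$ has direction in $Q_2$; the statement obtained by interchanging the indices is the corresponding conclusion of Proposition \ref{prop25}.

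The proof is therefore routine once one sees that it is Proposition \ref{prop25} specialized to $K=\mathrm{conv}(\gamma)$; the only point requiring a genuine (if elementary) verification is the identification $\partial(\mathrm{conv}\,\gamma)\cap Q_1 = \gamma_1$ — that gluing in the three other arcs does not enlarge the boundary inside the first quadrant — and this is where the short coordinate computation indicated above is unavoidable.
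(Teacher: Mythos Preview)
Your approach is exactly the paper's: observe that $v$ and $w$ are conjugate directions for the centrally symmetric convex body $\mathrm{conv}(\gamma)$, and then invoke Proposition~\ref{prop25}. The paper's proof is a two-line remark that takes the circumscribed-parallelogram property and the identification $\partial K\cap Q_1=\gamma_1$ as evident from the construction, whereas you have spelled out those verifications explicitly; substantively the arguments coincide.
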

\begin{proof} By construction it is immediate that $v$ and $w$ are conjugate directions for the centrally symmetric convex set $\mathrm{conv}(\gamma)$. Hence we just have to apply Proposition \ref{prop25}.

\end{proof}

Now we prove a sort of duality that holds for Radon curves: if we start with $\gamma_2$ and define a curve in the first quadrant in the same way that we did it before, we would obtain precisely $\gamma_1$. This is presented by  the next lemma. But first we notice that, by convexity, a ray from the origin $o$ through a point of the segment $[wv]$ must intersect $\gamma_1$ in exactly one point. Hence we may parametrize $\gamma_1$ by $\gamma_1(\theta) = f(\theta)((1-\theta)w+\theta v)$ for some continuous function $f:[0,1] \rightarrow \mathbb{R}$. Observe that, in particular, $f \geq 1$.

\begin{lemma}\label{lemma21} The curve
\begin{align*}
\theta \mapsto \frac{(1-\theta)w+\theta v}{\sup_{y\in\gamma_2}|[y,(1-\theta)w + \theta v]|}, \ \theta\in [0,1] \,,
\end{align*}
coincides with $\gamma_1$. In other words, the function $f(\theta)$ defined above may be written in terms of $\gamma_2$ as $f(\theta) = \left(\sup_{y\in\gamma_2}|[y,(1-\theta)w+\theta v]|\right)^{-1}$.
\end{lemma}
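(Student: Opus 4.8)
The statement claims a duality: applying the construction to $\gamma_2$ (in the roles that $\gamma_1$ played) returns $\gamma_1$. Since both $\gamma_1$ and the new curve are parametrized by rays through points of $[wv]$, it suffices to show that for each $\theta\in[0,1]$ the two curves meet the ray $\left.[o((1-\theta)w+\theta v)\right>$ at the same point, i.e. that
\begin{align*}
f(\theta) \;=\; \left(\sup_{y\in\gamma_2}\bigl|[y,(1-\theta)w+\theta v]\bigr|\right)^{-1}.
\end{align*}
Writing $u_\theta := (1-\theta)w+\theta v$, this is equivalent to showing $\sup_{y\in\gamma_2}|[y,u_\theta]| = 1/f(\theta)$. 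The plan is to compute this supremum directly from the explicit formula for $\gamma_2$.

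First I would substitute the definition of $\gamma_2$. Recall $\gamma_2(\lambda) = z_\lambda / s(\lambda)$ with $z_\lambda = (1-\lambda)w+\lambda(-v)$ and $s(\lambda) = \sup_{x\in\gamma_1}|[x,z_\lambda]|$. Then
\begin{align*}
\sup_{y\in\gamma_2}|[y,u_\theta]| \;=\; \sup_{\lambda\in[0,1]} \frac{|[z_\lambda,u_\theta]|}{s(\lambda)}.
\end{align*}
A short bilinear computation gives $[z_\lambda,u_\theta] = [(1-\lambda)w-\lambda v,\,(1-\theta)w+\theta v] = (1-\lambda)\theta + \lambda(1-\theta)$, which is nonnegative on the square, so the absolute value is harmless. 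Thus I must evaluate $\sup_\lambda \bigl((1-\lambda)\theta+\lambda(1-\theta)\bigr)/s(\lambda)$ and show it equals $1/f(\theta)$.

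For the lower bound $\sup \ge 1/f(\theta)$: take $x = \gamma_1(\theta) = f(\theta)u_\theta \in \gamma_1$. By definition of $s$, for every $\lambda$ we have $s(\lambda) \ge |[\gamma_1(\theta),z_\lambda]| = f(\theta)|[u_\theta,z_\lambda]| = f(\theta)\bigl((1-\lambda)\theta+\lambda(1-\theta)\bigr)$, which rearranges to $\bigl((1-\lambda)\theta+\lambda(1-\theta)\bigr)/s(\lambda) \le 1/f(\theta)$ — wait, that is the wrong direction, so this argument actually yields the \emph{upper} bound $\sup_\lambda \bigl((1-\lambda)\theta+\lambda(1-\theta)\bigr)/s(\lambda) \le 1/f(\theta)$. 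For the reverse inequality I need a $\lambda$ making the ratio large: the natural candidate is the $\lambda^\ast$ for which $z_{\lambda^\ast}$ lies in the supporting direction of $\mathrm{conv}(\gamma)$ at $\gamma_1(\theta)$. By Corollary \ref{coro17}, a supporting line to $\mathrm{conv}(\gamma)$ at the point $\gamma_1(\theta)\in\gamma_1\setminus\{v,w\}$ has direction in $Q_2$, hence is parallel to some $z_{\lambda^\ast}$; and the supremum defining $s(\lambda^\ast) = \sup_{x\in\gamma_1}|[x,z_{\lambda^\ast}]|$ is then attained exactly at $x=\gamma_1(\theta)$, giving $s(\lambda^\ast) = f(\theta)\bigl((1-\lambda^\ast)\theta+\lambda^\ast(1-\theta)\bigr)$, so the ratio at $\lambda^\ast$ equals $1/f(\theta)$. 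Combining the two bounds gives equality, hence $f(\theta) = \bigl(\sup_{y\in\gamma_2}|[y,u_\theta]|\bigr)^{-1}$, which is the claim.

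The main obstacle is the reverse (lower-bound) inequality, i.e. producing the extremal $\lambda^\ast$ and justifying that the supremum in $s(\lambda^\ast)$ is attained precisely at $\gamma_1(\theta)$. This is where Corollary \ref{coro17} does the real work: it guarantees both that every $Q_2$-direction supports $\mathrm{conv}(\gamma)$ on $\gamma_1$ (so the needed $\lambda^\ast$ exists) and that the supporting direction at an interior point of $\gamma_1$ lies in $Q_2$ (so it is of the form $z_{\lambda^\ast}$). One should also handle the boundary cases $\theta=0,1$ (where $\gamma_1(\theta)\in\{v,w\}$ and $f=1$) separately, using part (i) of Lemma \ref{lemma20} together with Corollary \ref{coro16}(i); there the identity reduces to $\sup_{y\in\gamma_2}|[y,w]| = 1$ and $\sup_{y\in\gamma_2}|[y,v]| = 1$, which follow from $s(0)=s(1)=1$ and the containment $\gamma_2\subseteq\mathrm{conv}\{-v,w,w-v\}$.
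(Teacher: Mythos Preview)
Your proposal is correct and follows essentially the same route as the paper: both establish the upper bound $\sup_{y\in\gamma_2}|[y,u_\theta]|\le 1/f(\theta)$ from the trivial estimate $s(\lambda)\ge |[\gamma_1(\theta),z_\lambda]|$, and then obtain equality by selecting $\lambda^\ast$ (the paper calls it $\sigma$) so that $z_{\lambda^\ast}$ is a supporting direction to $\mathrm{conv}(\gamma)$ at $\gamma_1(\theta)$, which forces $s(\lambda^\ast)=|[\gamma_1(\theta),z_{\lambda^\ast}]|$. Your explicit handling of the endpoints $\theta\in\{0,1\}$ is a small addition the paper leaves implicit.
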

\begin{proof} Let $\theta \in [0,1]$ be arbitrary and assume that
\begin{align*} \sup_{y\in\gamma_2}|[y,(1-\theta)w+\theta v]| = |[\gamma_2(\lambda),(1-\theta)w+\theta v]|, \end{align*}
i.e., the supremum is attained for the parameter $\lambda$ (considering the previously defined parametrization of $\gamma_2$). This yields
\begin{align*}
\sup_{y\in\gamma_2}|[y,(1-\theta)w+\theta v]| = \\ 
= \frac{|[(1-\lambda)w+\lambda(-v),(1-\theta)w+\theta v]|}{\sup_{x\in\gamma_1}|[x,(1-\lambda)w+\lambda(-v)]|} = \frac{\theta(1-\lambda)+\lambda(1-\theta)}{\sup_{x\in\gamma_1}|[x,(1-\lambda)w+\lambda(-v)]|} \leq \\ 
\leq \frac{\theta(1-\lambda)+\lambda(1-\theta)}{|[f(\theta)((1-\theta)w+\theta v),(1-\lambda)w + \lambda(-v)]|} = \frac{1}{f(\theta)}  \,.
\end{align*}
\noindent Hence $f(\theta) \leq \left(\sup_{y\in\gamma_2}|[y,(1-\theta)w+\theta v]|\right)^{-1}$. To prove the inverse inequality, we first notice that there exists a number $\sigma \in [0,1]$ such that
\begin{align*} \sup_{x\in\gamma_1}|[x,(1-\sigma)w+\sigma(-v)]| = |[\gamma_1(\theta),(1-\sigma)w+\sigma(-v)]|. \end{align*}
\noindent In fact, choose a line $l$ supporting $\mathrm{conv}(\gamma)$ and passing through $\gamma_1(\theta)$ whose direction lies in the second quadrant (the existence of such a line is guaranteed by Corollary \ref{coro16}). Hence $l$ is the line $t\mapsto\gamma_1(\theta)+t((1-\sigma)w+\sigma(-v))$ for some $\sigma \in [0,1]$. Thus, given any $\alpha \in [0,1]$, the ray $\left.[o\gamma_1(\alpha)\right>$ meets $l$ at a point $\gamma_1(\theta) + t_0((1-\sigma)w+\sigma(-v))$ for some $t_0\in\mathbb{R}$, and we get
\begin{align*} |[\gamma_1(\alpha),(1-\sigma)w+\sigma(-v)]| \leq \\ 
\leq |[\gamma_1(\theta)+t_0((1-\sigma)w+\sigma(-v)),(1-\sigma)w+\sigma(-v)]|
=\\ 
= |[\gamma_1(\theta),(1-\sigma)w+\sigma(-v)]|. \end{align*}
\noindent This shows the desired. Now,
\begin{align*} \sup_{y\in\gamma_2}|[y,(1-\theta)w+\theta v]| \geq |[\gamma_2(\sigma),(1-\theta)w+\theta v]| = \\
= \frac{|[(1-\sigma)w+\sigma(-v),(1-\theta)w+\theta v]|}{\sup_{x\in\gamma_1}|[x,(1-\sigma)w+\sigma v]|} = \frac{\theta(1-\sigma) + \sigma(1-\theta)}{|[\gamma_1(\theta),(1-\sigma)w+\sigma(-v)]|} = \\ 
=\frac{1}{f(\theta)},
\end{align*}
\noindent and this is the inverse inequality that we wanted. The proof is finished.

\end{proof}

In the next lemma, which is a technical one, we will explore a little better the assumption made (within the proof of the last lemma) on supporting lines with directions that realize the supremum of the determinant form.

\begin{lemma}\label{lemma23} Let $\lambda \in [0,1]$. The supremum $\sup_{x\in\gamma}|[x,(1-\lambda)w+\lambda(-v)]|$ is attained for a point of $\gamma_1$.  Analogously, if $\theta \in [0,1]$, then the supremum $\sup_{y\in\gamma}|[y,(1-\theta)w+\theta v]|$ is attained at some point of $\gamma_2$.
\end{lemma}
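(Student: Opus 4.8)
\emph{Proof proposal.} The plan is to interpret $\sup_{x\in\gamma}|[x,z_\lambda]|$, where $z_\lambda:=(1-\lambda)w+\lambda(-v)$, as the extreme value of the linear functional $\phi:=[\cdot,z_\lambda]$ on the convex body $\mathrm{conv}(\gamma)$, and then to read off from Corollary \ref{coro17} that this extreme value is attained at a point of $\gamma_1$. First I would note that $z_\lambda\ne o$, since $1-\lambda$ and $\lambda$ cannot both vanish and $v,w$ are linearly independent; hence $\phi$ is not identically zero on $\mathrm{conv}(\gamma)$, and $M:=\max_{x\in\mathrm{conv}(\gamma)}\phi(x)>0$. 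Because $\gamma=-\gamma$, the functional $\phi$ also attains the minimum $-M$, and therefore $\sup_{x\in\gamma}|[x,z_\lambda]|=M$. It is thus enough to exhibit a point $p\in\gamma_1$ with $\phi(p)=M$ or $\phi(p)=-M$.

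Such a point comes directly from Corollary \ref{coro17}: the vector $z_\lambda$ is a direction of the (closed) second quadrant $Q_2$ --- in its interior for $\lambda\in(0,1)$, and equal to $w$ or $-v$ for $\lambda=0$ or $\lambda=1$ --- so there is a point $p\in\gamma_1$ through which some supporting line $\ell$ of $\mathrm{conv}(\gamma)$ with direction $z_\lambda$ passes. Since $[\cdot,z_\lambda]$ is constant along $\ell$ (the form is alternating) and $\mathrm{conv}(\gamma)$ lies in one of the two closed half-planes determined by $\ell$, the value $\phi(p)$ equals $M$ or $-M$; in either case $|[p,z_\lambda]|=M=\sup_{x\in\gamma}|[x,z_\lambda]|$, so the supremum is attained at $p\in\gamma_1$. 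For the second assertion one argues identically: $(1-\theta)w+\theta v$ is a direction of the closed first quadrant $Q_1$, and the version of Corollary \ref{coro17} with the indices interchanged supplies a point of $\gamma_2$ lying on a supporting line of $\mathrm{conv}(\gamma)$ in that direction, which is then the desired maximizer.

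The only place where I would be careful is the endpoint directions $\lambda\in\{0,1\}$ (and likewise $\theta\in\{0,1\}$): there $z_\lambda$ is one of $w,-v$ and lies on the boundary shared by two quadrants, so I would make explicit that Corollary \ref{coro17} is still being applied to a bona fide direction of $Q_2$ (it is, since $Q_2$ is closed). If one prefers to avoid this point entirely, these cases can also be checked by hand --- for instance $s(0)=\sup_{x\in\gamma}|[x,w]|=|[v,w]|=1$, attained at $v\in\gamma_1$, and symmetrically for $\lambda=1$. Apart from this, the argument is a short consequence of central symmetry together with the supporting-line duality between $Q_1$ and $Q_2$ already recorded in Corollary \ref{coro17}; no computation is involved, and the only conceptual step is the correct translation between ``maximizer of $[\cdot,z_\lambda]$'' and ``point on a supporting line in direction $z_\lambda$''.
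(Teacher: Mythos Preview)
Your proposal is correct and follows essentially the same route as the paper: both use Corollary~\ref{coro17} to place a supporting line to $\mathrm{conv}(\gamma)$ in the direction $z_\lambda$ at a point of $\gamma_1$, and then observe that the linear functional $[\cdot,z_\lambda]$ is extremized there (the paper phrases this last step via the intersection of $\langle(-p)p\rangle$ with the supporting line, but the content is identical). The only cosmetic difference is that for the second assertion the paper appeals to the duality of Lemma~\ref{lemma21}, whereas you invoke the ``interchanged indices'' clause of Corollary~\ref{coro17} directly; both are valid.
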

\begin{proof} It is clear that we just have to prove the first statement, since then the second follows from the duality explained in Lemma \ref{lemma21}. Let $\lambda \in [0,1]$. In view of Corollary \ref{coro16} it follows that the direction $(1-\lambda)w + \lambda(-v)$, which belongs to the second quadrant, supports $\mathrm{conv}(\gamma)$ at some point $\gamma_1(\theta)$. Hence, if $p \in \gamma\setminus\{-\gamma_1(\theta),\gamma_1(\theta)\}$, the assumption that the line $\left<(-p)p\right>$ intersects this supporting line at the point $\gamma_1(\theta)+t_0((1-\lambda)w+\lambda(-v))$ yields
\begin{align*} |[p,(1-\lambda)w+\lambda(-v)]| \leq \\
\leq |[\gamma_1(\theta) + t_0((1-\lambda)w +\lambda(-v)),(1-\lambda)w + \lambda(-v)]| = \\
= |[\gamma_1(\theta),(1-\lambda)w+\lambda(-v)]|, \end{align*}
and this shows what we wanted.

\end{proof}

\section{Radon curves as circles of Minkowski planes}\label{mink}

Now we want to prove that Birkhoff orthogonality is symmetric in a normed plane if and only if its unit circle is a Radon curve. The chief ingredient is the next lemma, but let us start with a definition:
Given a normed plane $(V,||\cdot||)$ endowed with a determinant form $[\cdot,\cdot]$, we define the \textit{antinorm} of a vector $x \in V$ to be
\begin{align*} ||x||_a := \sup\{|[y,x]|:y \in S\}. \end{align*}
It is not difficult to see that $||\cdot||_a$ is indeed a norm on $V$. The unit circle of $||\cdot||_a$ is called the \textit{anticircle} of $S$ and denoted by $S_a$. Moreover, the supremum is attained for $y \in S$ if and only if $y \dashv_B x$. This is, in some sense, the bridge that connects supporting relations (which come from Convex Geometry) with the construction of Radon curves.

\begin{lemma}\label{lemma19} Let $(V,||\cdot||)$ be a normed plane, and assume that $[\cdot,\cdot]$ is a fixed non-degenerate symplectic bilinear form, with associated antinorm $||\cdot||_a$.
Then the following statements are equivalent. \\

\noindent\normalfont\textbf{(a)} \textit{Birkhoff orthogonality is a symmetric relation in $(V,||\cdot||)$.} \\

\noindent\textbf{(b)} \textit{The unit anticircle and the unit circle are homothets.} \\

\noindent\textbf{(c)} \textit{There exists a number $\lambda > 0$ such that $||\cdot|| = \lambda||\cdot||_a$.}
\end{lemma}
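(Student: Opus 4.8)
The plan is to prove the cycle of implications $(a)\Rightarrow(b)\Rightarrow(c)\Rightarrow(a)$, with the bulk of the work in $(a)\Rightarrow(b)$. The implications $(b)\Rightarrow(c)$ and $(c)\Rightarrow(a)$ are essentially formal: if $S_a = \lambda S$ for some $\lambda>0$, then $\|\cdot\|_a$ and $\|\cdot\|$ have proportional unit balls, hence $\|\cdot\| = \lambda\|\cdot\|_a$ (up to reconciling which factor multiplies which norm); and if $\|\cdot\| = \lambda\|\cdot\|_a$, then for $x\dashv_B y$ we have, by the characterization recalled just before the lemma, that the supremum defining $\|x\|_a$ is attained at (the normalization of) $x$ tested against $y$, and symmetry of the determinant-induced pairing together with the proportionality lets us read off $y\dashv_B x$. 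Concretely, $x\dashv_B y$ means $\|y\|_a = |[x/\|x\|,y]|\cdot(\text{something})$; the clean way is to use the fact that $x\dashv_B y$ iff $[x,y]$ realizes $\|x\|\cdot\|y\|_a$ in absolute value (suitably normalized), which is manifestly symmetric in the roles once $\|\cdot\|$ and $\|\cdot\|_a$ agree up to scaling.

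For $(a)\Rightarrow(b)$, the key point is the following: for each unit vector $x\in S$, Birkhoff orthogonality gives a unit vector $y_x\in S$ with $y_x\dashv_B x$, and then $\|x\|_a = |[y_x,x]|$. I would first show that $x\mapsto \|x\|_a \cdot x$ and the map sending $x$ to the unique boundary point of $S_a$ on the ray through $y_x$ are related by a fixed linear rescaling. More precisely: symmetry of Birkhoff orthogonality says $y_x\dashv_B x$ implies $x\dashv_B y_x$, so $\|y_x\|_a = |[x,y_x]| = |[y_x,x]| = \|x\|_a$. So along each Birkhoff-orthogonal pair the antinorm takes equal values. I would then argue that the anticircle, up to the linear map $J$ defined by $[u,v] = \langle Ju, v\rangle$ for an auxiliary inner product — or better, purely symplectically — is obtained from $S$ by a quarter-turn-type involution, and symmetry of $\dashv_B$ forces this involution to be (a scalar multiple of) the identity on directions.

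The cleanest route avoiding an auxiliary Euclidean structure: parametrize $S$ so that consecutive conjugate directions come in pairs (Lemma \ref{lemmap} guarantees a conjugate pair exists), and use Proposition \ref{prop25} together with the supremum-attainment characterization to show that the support function data of $S_a$ and the radial data of $S$ determine each other; symmetry of $\dashv_B$ then says the pairing "direction of $x$ $\leftrightarrow$ direction Birkhoff-orthogonal to $x$" is an involution, and for a centrally symmetric convex curve an involutive correspondence between a point and (the direction of) its Birkhoff-normal that is its own inverse forces $S_a$ and $S$ to have the same shape, i.e. to be homothets. I expect the main obstacle to be making rigorous the step "symmetry of the Birkhoff correspondence $\Rightarrow$ $S_a$ and $S$ are homothets" without secretly invoking a Euclidean background: the natural argument is to show that if Birkhoff orthogonality is symmetric then $x\dashv_B y \Leftrightarrow y\dashv_B x$ can be rewritten as "$x/\|x\|$ and $y/\|y\|$ are conjugate directions," and conjugate directions for $S$ are intrinsically the same as conjugate directions for $S_a$ — so every direction pair conjugate for one is conjugate for the other, and a planar centrally symmetric body is determined up to homothety by its full family of conjugate-direction pairs together with one normalization; this last uniqueness claim is the delicate part and is where I would spend the most care.
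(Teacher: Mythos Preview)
The paper does not prove this lemma at all: its entire proof is ``See \cite{martiniantinorms}.'' So there is no argument to compare your proposal against; you are attempting what the authors deliberately outsourced.

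On your plan itself: $(b)\Leftrightarrow(c)$ is indeed formal, and your $(c)\Rightarrow(a)$, though written confusingly, carries the right idea. Cleanly: if $x,y\in S$ with $x\dashv_B y$, then by the remark just before the lemma $|[x,y]|=\|y\|_a=\lambda^{-1}\|y\|=\lambda^{-1}$; since $|[y,x]|=|[x,y]|=\lambda^{-1}=\|x\|_a$, the vector $y$ attains the supremum defining $\|x\|_a$, and that same remark gives $y\dashv_B x$.

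The gap you yourself flag in $(a)\Rightarrow(b)$ is genuine, and the route through ``conjugate-direction pairs determine the body up to homothety'' is not the right formulation. For a general (non-Radon) body that statement is at best unclear, and even under hypothesis (a) you would first need to know that $\|\cdot\|_a$ \emph{also} has symmetric Birkhoff orthogonality before you could identify its conjugate pairs with its Birkhoff-orthogonal pairs. A tighter line: the supremum-attainment remark, read once for $\|\cdot\|$ and once for $\|\cdot\|_a$ together with $(\|\cdot\|_a)_a=\|\cdot\|$, gives the duality ``$x\dashv_B y$ in $\|\cdot\|$ iff $y\dashv_B x$ in $\|\cdot\|_a$.'' Combined with (a) this shows that $\|\cdot\|$ and $\|\cdot\|_a$ have \emph{identical} Birkhoff orthogonality relations. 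The residual lemma is then ``two planar norms with the same Birkhoff orthogonality are proportional'': equivalently, their unit circles have the same tangent direction at every radial direction, so (a.e., by convexity) the logarithmic derivatives of their radial functions coincide, forcing the ratio of the radial functions to be constant. That is the concrete content of your ``delicate part,'' and it does require an argument --- it is not a one-liner.
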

\begin{proof} See \cite{martiniantinorms}.

\end{proof}

\begin{remark}\label{remark8}\normalfont Notice that in normed planes where the statements of Lemma \ref{lemma19} hold, the relation $||\cdot|| = \lambda||\cdot||_a$ gives, in some sense, a natural choice of symplectic forms (up to orientation): changing $[\cdot,\cdot]$ by $\lambda[\cdot,\cdot]$ it follows that $||\cdot|| = ||\cdot||_a$. In this case, the unit anticircle and the unit circle coincide.
\end{remark}

\begin{teo}\label{teo8} Let $(V,||\cdot||)$ be a normed plane. Then Birkhoff orthogonality is a symmetric relation if and only if the unit circle of the norm $||\cdot||$ is a Radon curve.
\end{teo}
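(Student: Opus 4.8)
The plan is to establish both implications using Lemma~\ref{lemma19}, which reduces symmetry of Birkhoff orthogonality to the condition that the unit circle $S$ and its anticircle $S_a$ are homothets, and the construction from Section~\ref{radon} together with its duality (Lemma~\ref{lemma21}). By Remark~\ref{remark8}, after rescaling the symplectic form we may assume that a normed plane with symmetric Birkhoff orthogonality satisfies $\|\cdot\| = \|\cdot\|_a$, i.e. $S = S_a$; this normalization will be the working hypothesis in the forward direction.

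For the direction ``Radon curve $\Rightarrow$ symmetry'': suppose $S = \gamma = \gamma_1\cup\gamma_2\cup(-\gamma_1)\cup(-\gamma_2)$ is a Radon curve built from $\gamma_1$ and $\gamma_2$ as in Section~\ref{radon}, with $[v,w]=1$ and $v,w$ conjugate directions. I would compute the anticircle $S_a$ directly from its definition $\|x\|_a = \sup\{|[y,x]| : y\in S\}$. For a direction $z_\lambda = (1-\lambda)w+\lambda(-v) \in Q_2$, Lemma~\ref{lemma23} tells us the supremum over $\gamma$ is attained on $\gamma_1$, so $\|z_\lambda\|_a = s(\lambda) = \sup_{x\in\gamma_1}|[x,z_\lambda]|$, which means the point of $S_a$ in direction $z_\lambda$ is exactly $z_\lambda/s(\lambda) = \gamma_2(\lambda)$. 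Thus $S_a \cap Q_2 = \gamma_2$. Symmetrically, for a direction $(1-\theta)w+\theta v \in Q_1$, Lemma~\ref{lemma23} gives that the supremum over $\gamma$ is attained on $\gamma_2$, and Lemma~\ref{lemma21} identifies the resulting point of $S_a$ with $\gamma_1(\theta)$; hence $S_a\cap Q_1 = \gamma_1$. By central symmetry of both $S$ and $S_a$, we conclude $S_a = \gamma_1\cup\gamma_2\cup(-\gamma_1)\cup(-\gamma_2) = S$. So $S$ and $S_a$ coincide, hence are homothets, and Lemma~\ref{lemma19} gives that Birkhoff orthogonality is symmetric.

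For the converse ``symmetry $\Rightarrow$ Radon curve'': assume Birkhoff orthogonality is symmetric, normalize so that $S = S_a$ (Remark~\ref{remark8}), and fix a pair $v,w \in S$ of conjugate directions with $[v,w]=1$ (these exist by Lemma~\ref{lemmap}). Set $\gamma_1 := S\cap Q_1$ and $\gamma_2 := S\cap Q_2$. One checks that $\gamma_1$ connects $v$ and $w$, lies in $\mathrm{conv}\{o,v,w,v+w\}$ (using Proposition~\ref{prop25}, since conjugacy of $v,w$ forces the supporting lines at $v$ and $w$ to be the sides of the circumscribed parallelogram, so $S\cap Q_1$ stays inside that corner cell), and that $[ov]\cup\gamma_1\cup[ow]$ bounds a convex body. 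Then the candidate curve $\gamma_2'$ produced by the construction formula $\gamma_2'(\lambda) = z_\lambda / \sup_{x\in\gamma_1}|[x,z_\lambda]|$ satisfies $\gamma_2'(\lambda) = z_\lambda/\|z_\lambda\|_a$ (again invoking Lemma~\ref{lemma23}-type reasoning, now directly: the relevant supremum over all of $S$ is realized on $\gamma_1$ because the supporting direction $z_\lambda\in Q_2$ touches $S$ on $\gamma_1$ by Corollary~\ref{coro17}'s analogue / Proposition~\ref{prop25}), which equals $z_\lambda/\|z_\lambda\| = \gamma_2(\lambda)$ since $\|\cdot\|_a = \|\cdot\|$. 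Hence $S = \gamma_1\cup\gamma_2\cup(-\gamma_1)\cup(-\gamma_2)$ is exactly a Radon curve in the sense of Section~\ref{radon}.

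The main obstacle I expect is the bookkeeping in the forward direction: verifying carefully that the supremum defining $\|x\|_a$ for $x$ in a given quadrant is attained on the ``opposite'' arc ($\gamma_1$ for directions in $Q_2$, and $\gamma_2$ for directions in $Q_1$), and that this is consistent across the quadrant boundaries at $v$ and $w$. This is where Lemma~\ref{lemma23} and the duality Lemma~\ref{lemma21} do the real work, but one must make sure the parametrizations match up and that no point of $S_a$ is missed or double-counted; the rest is essentially a direct appeal to Lemma~\ref{lemma19}.
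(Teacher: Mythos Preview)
Your proposal is correct and follows essentially the same approach as the paper: both directions hinge on Lemma~\ref{lemma19} (reducing to $S=S_a$ after rescaling), with Lemma~\ref{lemma23} and the duality Lemma~\ref{lemma21} doing the work of matching the anticircle to the Radon construction quadrant by quadrant, and Proposition~\ref{prop25} replacing Corollary~\ref{coro17} in the converse. The only cosmetic difference is that the paper phrases the forward direction as ``every $p\in\gamma$ has $\|p\|_a=1$'' (reducing via duality to $p\in\gamma_1$), whereas you compute $S_a\cap Q_1$ and $S_a\cap Q_2$ separately; the content is identical.
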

\begin{proof} Assume first that the unit circle is a Radon curve $\gamma$ built as described previously (but possibly rescaling $[\cdot,\cdot]$ in order to have $[v,w] = 1$). Let $p \in \gamma$. Due to central symmetry and the duality described in Lemma \ref{lemma21} we may assume that $p = \gamma_1(\theta)$ for some $\theta \in [0,1]$. Hence, Lemma \ref{lemma23} yields
\begin{align*} ||\gamma_1(\theta)||_a = \sup_{y\in\gamma}|[y,\gamma_1(\theta)]| = \sup_{y\in\gamma_2}|[y,\gamma_1(\theta)]| = \\
=\sup_{y\in\gamma_2}\left|\left[y,\frac{(1-\theta)w+\theta v}{\sup_{z\in\gamma_2}|[z,(1-\theta)w+\theta v]|}\right]\right| = 1, \end{align*}
\noindent and therefore $||\cdot||_a = ||\cdot||$. For the converse, up to rescaling the symplectic form, assume that the antinorm and the norm coincide. Choose two conjugate diameters $\left<(-v)v\right>$ and $\left<(-w)w\right>$ and use the same notation as previously for the portions of $\gamma$ and quadrants determined by them. Any point $p \in \gamma_2$ can be written as $p =\alpha((1-\lambda)w + \lambda(-v))$ for some $\lambda \in [0,1]$ and some $\alpha >0$. Thus,
\begin{align*} 1 = ||p|| = ||p||_a = \sup_{x\in\gamma)}|[x,p]| = \sup_{x\in\gamma}|[x,\alpha((1-\lambda)w + \lambda(-v)]|. \end{align*}
It follows that $\alpha = \left(\sup_{x\in\gamma}|[x,(1-\lambda)w+\lambda(-v)]|\right)^{-1}$. Then, to show that the unit circle is a Radon curve, we just have to prove that this supremum is attained for some point of $\gamma_1$. For this sake, it is enough to repeat the proof of Lemma \ref{lemma23} using Proposition \ref{prop25} instead of Corollary \ref{coro16}.

\end{proof}

It is clear that the choice of a non-degenerate symplectic bilinear form gives an area measure. We finish this section with a characterization of Radon planes which, geometrically, means that any rectangle (in the Birkhoff sense) with unit sides has the same area if and only if the norm is Radon.

\begin{prop}\label{prop24} A normed plane $(V,||\cdot||)$ is Radon if and only if there exists a number $\lambda > 0$ such that $|[x,y]| = \lambda$ whenever $x$ and $y$ are unit vectors with $x \dashv_B y$.
\end{prop}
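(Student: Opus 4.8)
The plan is to reduce the statement to Lemma~\ref{lemma19} by means of the description of the antinorm in terms of Birkhoff orthogonality recorded just before that lemma. The key remark is this: if $x,y$ are unit vectors with $x \dashv_B y$, then the supremum defining $||y||_a = \sup\{|[z,y]| : z \in S\}$ is attained at $z = x$, and hence $||y||_a = |[x,y]|$. Conversely, for an arbitrary unit vector $y$ the supremum is attained (by compactness of $S$), and every maximizer $x \in S$ satisfies $x \dashv_B y$; so there always is at least one unit vector Birkhoff orthogonal to $y$, and for any such vector the value $|[x,y]|$ equals $||y||_a$.

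Granting this, the ``only if'' part goes as follows. Assume $(V,||\cdot||)$ is Radon; by Theorem~\ref{teo8} this means that Birkhoff orthogonality is symmetric, so Lemma~\ref{lemma19} provides $\mu > 0$ with $||\cdot|| = \mu||\cdot||_a$. Put $\lambda := 1/\mu$, so that $||\cdot||_a = \lambda||\cdot||$. For any unit vectors $x,y$ with $x \dashv_B y$ the remark above gives $|[x,y]| = ||y||_a = \lambda||y|| = \lambda$, as desired.

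For the ``if'' part, suppose that $|[x,y]| = \lambda$ for some fixed $\lambda > 0$ and all unit vectors $x,y$ with $x \dashv_B y$. Fix any $y \in S$ and let $x \in S$ be a point where the supremum defining $||y||_a$ is attained; then $x \dashv_B y$, so $||y||_a = |[x,y]| = \lambda$. Thus $||y||_a = \lambda$ for every $y \in S$, and by homogeneity $||\cdot||_a = \lambda||\cdot||$. In particular the unit circle and the unit anticircle are homothets, so by Lemma~\ref{lemma19} Birkhoff orthogonality is symmetric; that is, the plane is Radon.

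I do not anticipate a genuine obstacle: the whole argument hinges on the identity $||y||_a = |[x,y]|$ valid whenever $x$ is a unit vector with $x \dashv_B y$, after which both implications fall out of Lemma~\ref{lemma19}. The only point deserving a line of care is to note, in the ``if'' direction, that every unit vector $y$ does admit a unit vector $x$ with $x \dashv_B y$, which is automatic because the (always attained) maximizer in the definition of $||y||_a$ is such a vector. Finally, the geometric reading mentioned in the statement is just the observation that $|[x,y]|$ is the area of the parallelogram spanned by $x$ and $y$, so the equal-area condition for Birkhoff-unit rectangles is literally the condition $|[x,y]| = \lambda$.
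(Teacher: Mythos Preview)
Your proof is correct and rests on the same key identity as the paper's: for unit vectors $x,y$ with $x \dashv_B y$ one has $|[x,y]| = \|y\|_a$, after which Lemma~\ref{lemma19} does the work. The ``only if'' direction is identical to the paper's. For the ``if'' direction you argue directly---the hypothesis forces $\|y\|_a = \lambda$ for every $y \in S$, hence $\|\cdot\|_a = \lambda\|\cdot\|$, and Lemma~\ref{lemma19} applies---whereas the paper proceeds by contrapositive: assuming the plane is not Radon, it picks unit vectors $x,y$ with $x \dashv_B y$ but $y \not\dashv_B x$, then a unit $z$ with $z \dashv_B x$, and observes $|[z,x]| = \|x\|_a > |[y,x]|$, exhibiting two Birkhoff-orthogonal unit pairs with different determinant values. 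Your direct route is marginally cleaner and reuses Lemma~\ref{lemma19} symmetrically for both implications; the paper's contrapositive has the virtue of displaying concretely how the equal-area condition fails.
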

\begin{proof} If $||\cdot||$ is a Radon norm, then there exists a number $\lambda > 0$ such that $||\cdot||_a = \lambda||\cdot||$. Hence, if $x$ and $y$ are unit vectors such that $x\dashv_B y$, then 
\begin{align*} |[x,y]| = \sup_{z\in S}|[z,y]| = ||y||_a = \lambda. \end{align*}
\noindent Now, if $(V,||\cdot||)$ is not a Radon plane, we may choose vectors $x,y \in S$ such that $x$ is orthogonal to $y$, but  the converse is not true. Hence we may choose $z\in S\setminus\{y\}$ with $z \dashv_B x$,
and it follows that
\begin{align*}
|[z,x]| = \sup_{w\in S}|[w,x]| > |[y,x]|\,.
\end{align*}

\noindent This finishes our proof.

\end{proof}

\section{Further comments}\label{comments}

The existence of non-Euclidean norms for which Birkhoff orthogonality is a symmetric relation is a two-dimensional phenomenon. Indeed, if $(V,\|\cdot\|)$ is a Minkowski space with $\mathrm{dim}V \geq 3$, then a norm on it has symmetric Birkhoff orthogonality if and only if it is derived from an inner product (see Theorem 3.4.10 in \cite{thompson}).
Radon planes behave like the Euclidean plane regarding many properties. Also, there are many nice characterizations of Radon planes among all normed planes. For results in this direction we refer the reader to the papers \cite{bmt}, \cite{duvelmeyer}, \cite{martiniantinorms}, and \cite{martini1}, and to \S~4.7 and \S~4.8 in \cite{thompson}.
Some of these results can be described only in terms of Convex Geometry. We present two examples. The first one is merely a rewriting of Proposition \ref{prop24}.

\begin{prop} Let $\gamma$ be a closed curve which is the boundary of a convex body in a plane $V$ (endowed with a determinant form $[\cdot,\cdot]$) and centered at the origin.
Then $\gamma$ is a Radon curve if and only if there exists a number $\lambda > 0$ such that $|[x,y]| = \lambda$ whenever $x,y \in \gamma$ are such that the direction $y$ supports $\mathrm{conv}(\gamma)$ at $x$.
\end{prop}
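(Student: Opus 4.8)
The plan is to reduce this statement directly to Proposition \ref{prop24} by translating the hypotheses from the language of Convex Geometry into the language of Minkowski Geometry. First I would observe that, since $\gamma$ is the boundary of a convex body centered at the origin, it is the unit circle of a norm $\|\cdot\|$ on $V$; thus $\gamma = S$. The key dictionary entry is the one already recorded in \S\ref{mink}: for a unit vector $x$, the supremum $\sup\{|[y,x]| : y \in S\}$ defining $\|x\|_a$ is attained at $y \in S$ precisely when $y \dashv_B x$. Equivalently, $\|y\| = 1$ and $y \dashv_B x$ if and only if the line through $y$ in the direction $x$ supports $\mathrm{conv}(\gamma)$ at $y$ — this is just the geometric reading of Birkhoff orthogonality as a supporting-line condition. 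So ``$x, y \in \gamma$ with the direction $y$ supporting $\mathrm{conv}(\gamma)$ at $x$'' is exactly ``$x, y$ unit vectors with $x \dashv_B y$,'' modulo swapping the roles of the two letters.

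With this dictionary in hand, the forward implication is immediate: if $\gamma$ is a Radon curve, then $(V, \|\cdot\|)$ is a Radon plane, and Proposition \ref{prop24} gives a $\lambda > 0$ with $|[x,y]| = \lambda$ whenever $x, y$ are unit vectors and $x \dashv_B y$; rewriting $x \dashv_B y$ as ``the direction $x$ supports $\mathrm{conv}(\gamma)$ at $y$'' and relabelling yields the claimed condition. For the converse, suppose $\gamma$ satisfies the stated convex-geometric condition. Then for any unit vectors $x, y$ with $x \dashv_B y$, the direction $x$ supports $\mathrm{conv}(\gamma)$ at $y$, so by hypothesis $|[x,y]| = \lambda$; this is precisely the hypothesis of the converse direction of Proposition \ref{prop24}, whence $(V, \|\cdot\|)$ is Radon, i.e.\ $\gamma$ is a Radon curve.

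I do not expect a genuine obstacle here — the proposition is, as the authors say of its sibling in \S\ref{comments}, ``merely a rewriting.'' The only point requiring a modicum of care is making the translation between ``$y$ supports $\mathrm{conv}(\gamma)$ at $x$'' and Birkhoff orthogonality fully precise and symmetric in its bookkeeping: one should note that a direction $u$ supports $\mathrm{conv}(\gamma)$ at a boundary point $p$ exactly when $\|p\| \le \|p + tu\|$ for all $t \in \mathbb{R}$ (translating the support line to pass through the origin shows $p$ is a closest point of the line $p + \mathbb{R}u$ to $o$), i.e.\ when $p \dashv_B u$. Feeding this into both directions of Proposition \ref{prop24} and keeping track of which variable plays which role closes the argument; no new estimate or construction is needed.
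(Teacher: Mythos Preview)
Your proposal is correct and matches the paper's approach exactly: the paper gives no separate proof of this proposition, stating only that it is ``merely a rewriting of Proposition~\ref{prop24},'' which is precisely the translation you carry out. Your dictionary ``direction $y$ supports $\mathrm{conv}(\gamma)$ at $x$'' $\Longleftrightarrow$ ``$x \dashv_B y$'' is the right one (there is a small slip in your forward implication where you write the support the other way around, but since $|[x,y]| = |[y,x]|$ the relabelling you invoke fixes it).
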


In \cite{duvelmeyer}, D\"{u}velmeyer proved that a norm is Radon if and only if Busemann and Glogovskii angular bisectors coincide for any angle (definitions are given in the proof below).
This yields immediately the following non-Minkowskian characterization of Radon curves.

\begin{prop}\label{propchar} Let $\gamma$ be a closed  curve in a plane $V$ which is the boundary of a convex body and centered at the origin. Then $\gamma$ is a Radon curve if and only if for every $p \in V\setminus\mathrm{conv}(\gamma)$ the following property holds: let $r$ and $s$ be the tangents to $\mathrm{conv}(\gamma)$ passing through $p$. Let $x_0$ and $x_1$ be the points where the line parallel to $r$ through the origin intersects $\gamma$ and $s$, respectively, and let $y_0$ and $y_1$ be the respective intersections of the line parallel to $s$ and passing through the origin with $\gamma$ and $r$ (see Figure \ref{fig1r}).
Then the line through $x_1$ and $y_1$ is parallel to the line through $x_0$ and $y_0$.
\end{prop}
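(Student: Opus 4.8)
The plan is to derive the statement from D\"uvelmeyer's theorem quoted above: a norm is Radon if and only if, for every angle, its Busemann and Glogovskii angular bisectors coincide. So I first recall how these two bisectors look. For an angle with vertex $p$ whose sides have directions $a,b$ (oriented into the angular region), the Busemann bisector is the ray from $p$ in direction $\frac{a}{\|a\|}+\frac{b}{\|b\|}$. For the Glogovskii bisector --- the locus of points of the region at equal $\|\cdot\|$-distance from the two lines carrying the sides --- I would use the elementary identity
\[
\mathrm{dist}\bigl(o,\{q+ta:t\in\mathbb{R}\}\bigr)=\frac{|[q-o,a]|}{\|a\|_a}\,,
\]
which is immediate: the minimizing $t_0$ in $\min_t\|(q-o)+ta\|$ produces a vector $v_0=(q-o)+t_0a$ with $v_0\dashv_B a$, and since $v_0/\|v_0\|\in S$ is Birkhoff orthogonal to $a$ we get $\|a\|_a=|[v_0,a]|/\|v_0\|$ by the characterization of the antinorm recalled before Lemma \ref{lemma19} (and $[a,a]=0$). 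A one-line computation with this identity then shows that the Glogovskii bisector of the angle is the ray from $p$ in direction $\frac{a}{\|a\|_a}+\frac{b}{\|b\|_a}$. Consequently, the two bisectors coincide precisely when $\frac{\|a\|_a}{\|a\|}=\frac{\|b\|_a}{\|b\|}$, a condition depending only on the two directions.

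Next I would show that, for the angle at a point $p\in V\setminus K$ (where $K:=\mathrm{conv}(\gamma)$) formed by the two tangents $r,s$, the parallelism in the statement is exactly this condition. Let $u_r,u_s$ be the directions of $r,s$, oriented to point from $p$ into the region containing $K$ and scaled so that $[u_r,u_s]=1$; since $o\in\mathrm{int}\,K$ lies in that region, $o-p=\alpha u_r+\beta u_s$ with $\alpha,\beta>0$. Then $r$ is the line $\rho\mapsto o-\beta u_s+\rho u_r$ and $s$ the line $\sigma\mapsto o-\alpha u_r+\sigma u_s$; intersecting $s$ with the line $\{o+tu_r\}$ (through $o$, parallel to $r$) gives $x_1=o-\alpha u_r$, and likewise $y_1=o-\beta u_s$. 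On the other hand the two points of $\gamma$ on the line through $o$ parallel to $r$ are $o\pm u_r/\|u_r\|$, and similarly for $s$. A determinant computation, using $[u_r,u_s]=1$, shows that for the choice of $x_0,y_0$ indicated in Figure \ref{fig1r} (with the other choice the chords are never parallel) the line $x_0y_0$ is parallel to the line $x_1y_1$ if and only if $\frac{\alpha}{\|u_s\|}=\frac{\beta}{\|u_r\|}$.

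It remains to read off $\alpha$ and $\beta$. Because $r$ is a support line of the unit ball $K$ it misses $\mathrm{int}\,K$ and touches $\gamma$, so $\mathrm{dist}(o,r)=1$; applying the distance identity to the description $r=\{o-\beta u_s+\rho u_r\}$ gives $1=|[-\beta u_s,u_r]|/\|u_r\|_a=\beta/\|u_r\|_a$, hence $\beta=\|u_r\|_a$, and symmetrically $\alpha=\|u_s\|_a$. Thus, for each $p$, the parallelism is equivalent to $\frac{\|u_s\|_a}{\|u_s\|}=\frac{\|u_r\|_a}{\|u_r\|}$, i.e. to the coincidence of the two bisectors of the angle at $p$. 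Finally, as $p$ runs through $V\setminus K$ the unordered pair $\{u_r,u_s\}$ of tangent directions takes every pair of non-parallel directions with at most one exception: a circumscribed parallelogram in two prescribed directions has all four vertices on $\gamma$ only when $K$ is that parallelogram, and otherwise one vertex lies in $V\setminus K$ and has those two lines as its tangents to $K$. Since $u\mapsto\|u\|_a/\|u\|$ is continuous on the (connected) set of directions, the parallelism holds for every $p$ if and only if this ratio is constant, i.e. $\|\cdot\|$ is a constant multiple of $\|\cdot\|_a$, which by Lemma \ref{lemma19} and Theorem \ref{teo8} means precisely that $\gamma$ is a Radon curve. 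The reverse implication is immediate, since for a Radon curve $\|\cdot\|_a$ and $\|\cdot\|$ are proportional, so the ratio is constant.

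The core of the proof, and the step I expect to need the most care, is the identification $\beta=\|u_r\|_a$ (equivalently, the fact that every support line of the unit ball lies at $\|\cdot\|$-distance $1$ from the center, read through the antinorm), together with the bookkeeping of which of the two intersection points serve as $x_0$ and $y_0$. Once the distance identity, the explicit positions of $x_1,y_1$, and this identification are in hand, matching the parallelism with the bisector condition is a short determinant calculation, and extending from the tangent-angles at points of $V\setminus K$ to all angles is only a density remark.
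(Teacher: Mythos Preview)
Your argument is correct, but it takes a different route from the paper's. The paper invokes D\"uvelmeyer's theorem as a black box and then translates the parallelism condition into the coincidence of the two bisectors via a purely geometric identification: it quotes (without proof) that the Glogovskii bisector of the tangent cone from $p$ is always the line $\langle op\rangle$, observes that the Busemann direction is $x_0+y_0$, and leaves it to the reader that ``$x_0y_0\parallel x_1y_1$'' is equivalent to ``$x_0+y_0$ points along $\langle op\rangle$'' (which follows since $x_1+y_1=p$). You instead compute everything explicitly through the antinorm: you derive the Glogovskii direction as $u_r/\|u_r\|_a+u_s/\|u_s\|_a$, read off $\alpha=\|u_s\|_a$ and $\beta=\|u_r\|_a$ from the distance identity, and reduce the parallelism to the equality $\|u_r\|_a/\|u_r\|=\|u_s\|_a/\|u_s\|$. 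At that point you no longer need D\"uvelmeyer at all---your final step goes straight through Lemma~\ref{lemma19} (and Theorem~\ref{teo8}), so your proof is in fact self-contained within the paper and, as a byproduct, essentially re-derives D\"uvelmeyer's characterization. The paper's version is shorter because it outsources the hard direction to \cite{duvelmeyer}; yours is longer but buys independence from that reference and makes transparent why the antinorm enters. Your care about which of the two intersection points serve as $x_0,y_0$ and the density remark covering the one possibly missing pair of tangent directions are genuine points the paper passes over in silence.
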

\begin{proof} First, notice that every angle can be realized, up to translation, as the angle formed by two concurrent tangent lines to $\gamma$. It is known that given a point $p \in V\setminus\mathrm{conv}(\gamma)$ and lines $r$ and $s$ as in the enunciate, the \textit{Glogovskii angular bisector} of the angle determined by $r$ and $s$ is the line $\left<op\right>$. (In the language of Minkowski planes, the Glogovskii bisector of the angle determined by $r$ and $s$ consists of all midpoints of norm circles having the rays of  this angle in tangential position.)
 On the other hand, the \textit{Busemann angular bisector} of this angle is the ray starting at $p$ in the direction of the sum of the unit vectors (with respect to the norm having $\gamma$ as unit circle) in directions $r$ and $s$. This can easily be formulated not depending on norms, and the desired follows.

\begin{figure}[h]
\centering
\includegraphics{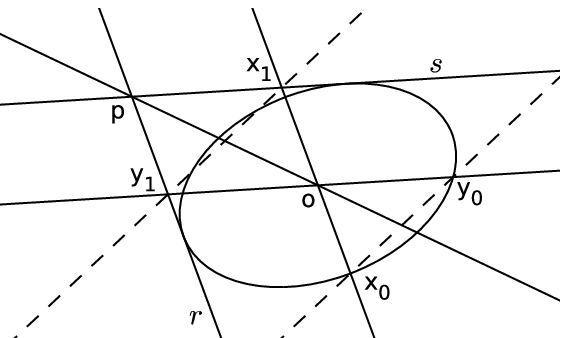}
\caption{Proposition \ref{propchar}}
\label{fig1r}
\end{figure}

\end{proof}

\begin{remark}\normalfont We underline once more that this characterization of Radon curves relies only in basic concepts of vectorial spaces. We even need not fix a determinant form.
\end{remark}

\noindent{\bf Acknowledgements} The first named author thanks to CAPES for partial financial support during the preparation of this manuscript.

\end{document}